\newtheorem{theorem}{Theorem}
\newtheorem{maintheorem}[theorem]{Main Theorem}
\newtheorem{corollary}[theorem]{Corollary}
\newtheorem{sublemma}{Lemma}[theorem]
\newcommand{\QED}{\end{proof}}
\def\BF#1.{{\bf #1.}}
\newcommand{\url}[1]{{\tt #1}}
\newcommand{\Vopenka}{Vop\v{e}nka}
\renewcommand{\P}{{\mathbb P}}
\newcommand{\R}{{\mathbb R}}
\newcommand{\of}{\subseteq}
\newcommand{\set}[1]{\{\,{#1}\,\}}
\newcommand{\Add}{\mathop{\rm Add}}
\newcommand{\satisfies}{\models}
\newcommand{\forces}{\Vdash}
\newcommand{\intersect}{\cap}
\newcommand{\smalllt}{\mathrel{\mathchoice{\raise2pt\hbox{$\scriptstyle<$}}{\raise1pt\hbox{$\scriptstyle<$}}{\raise0pt\hbox{$\scriptscriptstyle<$}}{\scriptscriptstyle<}}}
\newcommand{\smallleq}{\mathrel{\mathchoice{\raise2pt\hbox{$\scriptstyle\leq$}}{\raise1pt\hbox{$\scriptstyle\leq$}}{\raise1pt\hbox{$\scriptscriptstyle\leq$}}{\scriptscriptstyle\leq}}}
\newcommand{\lt}{\smalllt}
\newcommand{\ltomega}{{{\smalllt}\omega}}
\newcommand{\boolval}[1]{\mathopen{\lbrack\!\lbrack}\,#1\,\mathclose{\rbrack\!\rbrack}}
\def\[#1]{\boolval{#1}}
\newcommand{\UnderTilde}[1]{{\setbox1=\hbox{$#1$}\baselineskip=0pt\vtop{\hbox{$#1$}\hbox to\wd1{\hfil$\sim$\hfil}}}{}}
\newcommand{\Undertilde}[1]{{\setbox1=\hbox{$#1$}\baselineskip=0pt\vtop{\hbox{$#1$}\hbox to\wd1{\hfil$\scriptstyle\sim$\hfil}}}{}}
\newcommand{\undertilde}[1]{{\setbox1=\hbox{$#1$}\baselineskip=0pt\vtop{\hbox{$#1$}\hbox to\wd1{\hfil$\scriptscriptstyle\sim$\hfil}}}{}}
\newcommand{\UnderdTilde}[1]{{\setbox1=\hbox{$#1$}\baselineskip=0pt\vtop{\hbox{$#1$}\hbox to\wd1{\hfil$\approx$\hfil}}}{}}
\newcommand{\Underdtilde}[1]{{\setbox1=\hbox{$#1$}\baselineskip=0pt\vtop{\hbox{$#1$}\hbox to\wd1{\hfil\scriptsize$\approx$\hfil}}}{}}
\newcommand{\st}{\mid}
\def\<#1>{\langle#1\rangle}
\newcommand{\ZFC}{{\rm ZFC}}
\newcommand{\ZF}{{\rm ZF}}
\newcommand{\ZFA}{{\rm ZFA}}
\newcommand{\AC}{{\rm AC}}
\newcommand{\RR}{{\rm RR}}
\newcommand{\cell}[1]{\boxit{\hbox to 17pt{\strut\hfil$#1$\hfil}}}
\newcommand{\head}[2]{\lower2pt\vbox{\hbox{\strut\footnotesize\it\hskip3pt#2}\boxit{\cell#1}}}
\newcommand{\boxit}[1]{\setbox4=\hbox{\kern2pt#1\kern2pt}\hbox{\vrule\vbox{\hrule\kern2pt\box4\kern2pt\hrule}\vrule}}
\newcommand{\Col}[3]{\hbox{\vbox{\baselineskip=0pt\parskip=0pt\cell#1\cell#2\cell#3}}}
\newcommand{\tapenames}{\raise 5pt\vbox to .7in{\hbox to .8in{\it\hfill input: \strut}\vfill\hbox to
.8in{\it\hfill scratch: \strut}\vfill\hbox to .8in{\it\hfill output: \strut}}}
\newcommand{\Head}[4]{\lower2pt\vbox{\hbox to25pt{\strut\footnotesize\it\hfill#4\hfill}\boxit{\Col#1#2#3}}}
\newcommand{\Dots}{\raise 5pt\vbox to .7in{\hbox{\ $\cdots$\strut}\vfill\hbox{\ $\cdots$\strut}\vfill\hbox{\
$\cdots$\strut}}}
\newcommand{\df}{\it} 
\begin{document}
\author[Hamkins]{Joel David Hamkins}
\address{J. D. Hamkins, Mathematics,
The Graduate Center of The City University of New York, 365
Fifth Avenue, New York, NY 10016 \& Mathematics, The
College of Staten Island of CUNY, Staten Island, NY 10314;
and Department of Philosophy, New York University, 5
Washington Place, New York, NY 10003}
\email{jhamkins@gc.cuny.edu, http://jdh.hamkins.org}
\thanks{The research of the first author has been
supported in part by grants from the National Science
Foundation, the Simons Foundation and the CUNY Research
Foundation.}
\author[Palumbo]{Justin Palumbo}
\address{J. Palumbo, Department of Mathematics,
                    University of California at Los Angeles,
                    Los Angeles, California}
\email{justinpa@math.ucla.edu}
\subjclass[2000]{03E25, 03E35}\keywords{set theory, forcing}
\begin{abstract}
The rigid relation principle, introduced in this article,
asserts that every set admits a rigid binary relation. This
follows from the axiom of choice, because well-orders are
rigid, but we prove that it is neither equivalent to the
axiom of choice nor provable in Zermelo-Fraenkel set theory
without the axiom of choice. Thus, it is a new weak choice
principle. Nevertheless, the restriction of the principle
to sets of reals (among other general instances) is
provable without the axiom of choice.
\end{abstract}

\title{The rigid relation principle, a new
weak choice principle} \maketitle

In this article we introduce the rigid relation principle
(\RR), which asserts that every set admits a rigid binary
relation, or equivalently, that every set is the vertex set
of a rigid directed graph. To be precise, \RR\ asserts that
for every set $A$ there is a binary relation $R$ on $A$
such that the structure $\<A,R>$ is {\df rigid}, meaning
that it has no nontrivial automorphisms, that is, no
bijective function $\pi:A\to A$ such that
$a\mathrel{R}b\iff \pi(a)\mathrel{R}\pi(b)$, other than the
identity function.

The \RR\ principle follows easily from the axiom of choice,
because under \AC\ every set $A$ has a well-order $\lt$,
and it is easy to see that well-orders are rigid: if a
bijection $\pi:A\to A$ is $\lt$-preserving, then there can
be no $\lt$-minimal element $a$ for which $\pi(a)\neq a$,
and so $\pi$ is the identity function. Nevertheless, we
shall prove that \RR\ is not equivalent to \AC, and neither
is it provable in \ZF, assuming \ZF\ is consistent. Thus,
it is a weak choice principle.

\begin{maintheorem}
 The rigid relation principle \RR\ is a weak choice principle in
 the sense that it is a consequence of the axiom of choice,
 but not equivalent to it, assuming \ZF\ is consistent, and
 neither is it provable in \ZF\ alone. Nevertheless,
 the restriction of \RR\ to sets of reals (among other
 general instances) is provable in \ZF.
\end{maintheorem}

In other words, the first sentence of the theorem asserts
that $\ZFC$ proves $\RR$, but $\ZF+\RR+\neg\AC$ and
$\ZF+\neg\RR$ are each consistent, if \ZF\ is. The theory
$\ZF+\RR$ therefore lies strictly between $\ZF$ and $\ZFC$,
if these theories are consistent. This research project
arose out of our inquiries and answers posted on
MathOverflow at
\cite{MO6262:DoesEverySetAdmitARigidBinaryRelation?}. The
claims made in the main theorem are proved separately in
theorems \ref{Theorem.SetsOfReals},
\ref{Theorem.RxBsubsetsAreRigid}, \ref{Theorem.ZF+notRR}
and \ref{Theorem.RR+notAC}, plus the observation above that
\AC\ easily implies \RR.

Before continuing, let us mention that a related classical
result of \Vopenka, Pultr and Hedrlin
\cite{VopenkaPultrHedrlin1965:RigidRelationEverySet}
establishes in \ZFC, using the axiom of choice, that every
set $A$ admits a binary relation $R$ with what we shall
call a {\df strongly rigid} relation, namely, the structure
$\<A,R>$ not only has no nontrivial automorphisms, but also
has no nontrivial endomorphisms, that is, no 
functions $f:A\to A$ with $a\mathrel{R} b \Rightarrow f(a)\mathrel{R}f(b)$,
other than the identity function. No infinite well-order is strongly rigid. It is
an interesting elementary exercise to see, without the
axiom of choice, that every countable set has a strongly
rigid relation.

\section{The \RR\ principle is provable in \ZF\ for sets of
reals}

Let us begin with the observation that the rigid relation
principle is outright provable in \ZF\ when it comes to
sets of reals.

\begin{theorem}[\ZF]\label{Theorem.SetsOfReals}
Every set of reals admits a rigid binary relation.
\end{theorem}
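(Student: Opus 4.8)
The plan is to split into two cases according to whether $A$ is Dedekind infinite. The easy case is when $A$ is Dedekind finite, meaning that there is no injection of $\omega$ into $A$. Here I would simply take $R$ to be the usual order $<$ of the reals restricted to $A$ and argue that it is already rigid: a nontrivial order automorphism $\pi\colon A\to A$ must move some $a$, and (replacing $\pi$ with $\pi^{-1}$ if needed) we may assume $\pi(a)>a$, whereupon $a<\pi(a)<\pi^2(a)<\cdots$ exhibits an injection $n\mapsto\pi^n(a)$ of $\omega$ into $A$, a contradiction. In fact this shows that any linear order on a Dedekind-finite set is rigid, and every set of reals carries a linear order.

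The substantive case is when $A$ is Dedekind infinite. I would fix an injection $n\mapsto c_n$ of $\omega$ into $A$, put $C=\{\,c_n:n\in\omega\,\}$, and fix once and for all a $\ZF$-definable injection $a\mapsto\hat a$ of $\mathbb R$ into $\mathcal P(\omega)$ (for example $\hat a=\{\,n:q_n<a\,\}$ relative to a fixed enumeration $\<q_n:n\in\omega>$ of $\mathbb Q$). The goal is to build a relation $R$ on $A$ making $\<A,R>$ extensional (distinct elements having distinct sets of $R$-predecessors) and well founded; the Mostowski collapse then carries $\<A,R>$ isomorphically onto a transitive set $\<T,\in>$, and transitive sets are rigid, since by $\in$-induction the only $\in$-automorphism of a transitive set is the identity.

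For the construction I would use the $c_n$ as coordinates: declare the $R$-predecessors of $c_n$ to be $\{\,c_m:m<n\,\}$, and for $a\in A\setminus C$ declare the $R$-predecessors of $a$ to be $\{\,c_n:n\in e(a)\,\}$, where $e(a)\subseteq\omega$ depends injectively on $a$ and is infinite --- say $e(a)=\{\,2n:n\in\omega\,\}\cup\{\,2n+1:n\in\hat a\,\}$ --- and then let $R=\{\,(x,y):x\text{ is an }R\text{-predecessor of }y\,\}$. Since every $R$-predecessor lies in $C$ and the $R$-predecessors of $c_n$ have strictly smaller index, there is no infinite $R$-descending chain, so $R$ is well founded; and $R$ is extensional because the predecessor set of $c_n$ is the finite set $\{c_0,\dots,c_{n-1}\}$ whereas the predecessor set of each $a\in A\setminus C$ is infinite and, since $n\mapsto c_n$ and $a\mapsto\hat a$ are injective, recovers $a$. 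Hence $\<A,R>$ collapses onto a transitive set and is rigid. (When $A=C$ this $R$ is just a well-order of type $\omega$, which is rigid in any case.)

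The step I expect to require the most care is the extensionality check, specifically ruling out that a coordinate vertex $c_n$ share its set of $R$-predecessors with some other vertex --- which is precisely why the sets $e(a)$ are arranged to be infinite while the sets $\{c_0,\dots,c_{n-1}\}$ are finite. The argument uses nothing about the reals except the injection $a\mapsto\hat a$, so I expect the same construction to handle any set that injects into $\mathcal P(\omega)$ and, with ordinal-indexed coordinates, the further instances recorded in Theorem~\ref{Theorem.RxBsubsetsAreRigid}; this accounts for the parenthetical assertion in the main theorem.
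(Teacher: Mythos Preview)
Your proof is correct, and in the Dedekind-infinite case it takes a genuinely different route from the paper. The paper handles the Dedekind-finite case essentially as you do (with a separate trivial countable sub-case), but in the main case it does \emph{not} build a well-founded relation: it fixes a distinguished point $z^*$ carrying a self-loop $R(z^*,z^*)$, arranges the remaining chosen points $z_n$ as an $\omega$-chain of predecessors of $z^*$, and for $y\notin Z$ declares $R(z_n,y)$ iff $y$ lies in the $n$th basic open set of Cantor space. Rigidity is then argued by definability: $z^*$ is the unique reflexive point, each $z_n$ is definable from $z^*$, so any automorphism fixes every $z_n$, hence respects every basic neighborhood, hence fixes every real. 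Your argument instead isolates a structural reason for rigidity---well-founded plus extensional---and appeals to Mostowski (or, equivalently, a direct well-founded induction), which is cleaner and more conceptual. The paper's hands-on construction buys a slightly stronger conclusion, namely that every point is outright definable in $\langle A,R\rangle$, and this definability is what the paper leans on when bootstrapping to subsets of $\mathbb{R}\times B$ in Theorem~\ref{Theorem.RxBsubsetsAreRigid}; your well-founded/extensional template would need a small adaptation there, since the second coordinate need not inject into $\mathcal P(\omega)$.
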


\begin{proof}
We shall not use the axiom of choice in this argument.
Suppose that $A$ is a set of reals. We may regard $A$ as a
subset of Cantor space $2^\omega$, since this space is
bijective with $\R$. We break into several cases.

If $A$ is countable, then we may impose an order structure
on it by making it a linear order isomorphic to $\omega$,
or a finite linear order if $A$ is finite, and these orders
are rigid.


Let us suppose next merely that $A$ has a countably
infinite subset. Enumerate this subset as $Z=\{\,
z^*,z_0,z_1,\ldots\,\}\of A$, where $z^*$ and the $z_n$ are
all distinct. For each finite binary sequence $s$, let
$U_s$ be the neighborhood in $2^\omega$ of all sequences
extending $s$, so that $U_s(x)\iff s\of x$. The structure
$\<A,U_s>_{s\in 2^\ltomega}$ is rigid, since if a point $x$
in $A$ is moved to another point, then it moves out of some
neighborhood $U_s$ that it was formerly in. We now reduce
this structure to a binary relation. Begin by enumerating
the finite binary sequences as $s_0$, $s_1$, and so on
(this does not require \AC). Let $R$ be the relation on $A$
that, first, places all the $z_n$ below $z^*$, ordered like
$\lt$ on $\omega$, and makes $R(z^*,z^*)$ true. That is, we
ensure that $R(z^*,z^*)$ holds and that $R(z_n,z^*)$ and
$R(z_n,z_{n+1})$ hold for every $n$. Next, for $y\notin Z$,
we define that $R(x,y)$ holds if and only if $x=z_n$ for
some $n$ and $U_{s_n}(y)$. That is, the first coordinate
gives you some $z_n$, and hence some $s_n$, and then you
use this to determine which neighborhood predicate to apply
to $y$, for $y$ outside of $Z$. Let us argue that the
structure $\<A,R>$ is rigid. First, observe that $z^*$ is
the only real for which $R(z^*,z^*)$, and that the only predecessors
of $z^*$ are of the form $z_n$. Each $z_n$ is thus
individually definable since $R$ orders them in order type
$\omega$. So $z^*$ and each $z_n$ are definable in $\<A,R>$
and therefore fixed by all automorphisms. Since every $z_n$
is fixed, it follows that every automorphism must respect
the neighborhood $U_{s_n}\intersect A$, and hence fix all
reals. So there are no nontrivial automorphisms, and $\<A,
R>$ is rigid, as desired.



The remaining case occurs when $A$ is uncountable, but has
no countably infinite subset. In other words, $A$ is
infinite and Dedekind finite. In this case, every
permutation of A will consist of disjoint orbits of finite
length, since if there were an infinite orbit, then we
could build a countably infinite subset of A by iterating
it. But if every permutation of A is like that, then A has
no permutations that respect a linear order, such as the
usual linear order $\lt$ of the reals. Thus, $\<A,\lt>$ is
rigid.
\end{proof}

It follows from this theorem that the easy observation that
\AC\ implies \RR\ because well-orders are rigid cannot be
reversed on a set-by-set basis.

\begin{corollary}\label{Corollary.RigidNotWO}
 It is relatively consistent with \ZF\ that there are sets
 that are not well-orderable, but which nevertheless have rigid binary
 relations.
\end{corollary}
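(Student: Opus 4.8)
The plan is to read the corollary off of Theorem~\ref{Theorem.SetsOfReals} together with the classical fact that, in the absence of the axiom of choice, the set $\R$ of real numbers need not be well-orderable. First I would recall that it is a standard relative consistency result that \ZF\ does not prove $\R$ is well-orderable: this already fails in the basic Cohen model witnessing the independence of \AC\ (since an infinite well-orderable set has a countably infinite subset, while the set of added Cohen reals there has none), and it fails as well in Solovay's model of \ZF\ in which every set of reals is Lebesgue measurable. Fix any such model $V'\models\ZF$ in which $\R$ carries no well-ordering; crucially, $V'$ is obtained as a symmetric submodel of a forcing extension and is a genuine model of \ZF, so Theorem~\ref{Theorem.SetsOfReals} holds there.

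Now I would work inside $V'$ and take $A=\R$, which by assumption is not well-orderable. Applying Theorem~\ref{Theorem.SetsOfReals}, proved in \ZF\ alone, the set $A$ nonetheless admits a rigid binary relation $R$, so that $\<A,R>$ is a rigid structure. Hence $V'$ satisfies \ZF\ together with the assertion that there is a set which is not well-orderable yet carries a rigid binary relation, and this gives the desired relative consistency. If one prefers a proper subset of $\R$ rather than $\R$ itself, one may instead take $A$ to be any non-well-orderable set of reals available in $V'$---for instance the Dedekind-finite set of added Cohen reals in the Cohen model---and argue in exactly the same way.

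There is no genuine obstacle here: the entire mathematical content lies in Theorem~\ref{Theorem.SetsOfReals}, and the corollary requires only the additional, well-known input that ``\ZF\ together with `$\R$ is not well-orderable' '' is consistent relative to \ZF. The one point worth a sentence of care is simply that passing to a model where some set of reals fails to be well-orderable does not covertly reintroduce the axiom of choice, so that the \ZF-theorem on rigid relations genuinely applies in that model and hands us the relation for free.
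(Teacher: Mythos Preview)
Your proposal is correct and follows essentially the same approach as the paper: invoke a known model of \ZF\ in which some set of reals fails to be well-orderable, and then apply Theorem~\ref{Theorem.SetsOfReals} to that set. The paper's proof is simply a terser version of yours, citing the standard symmetric-model counterexamples to \AC\ (specifically \cite[Lemma 5.15]{Jech1973:TheAxiomOfChoice}) rather than spelling out the Cohen model in detail.
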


\begin{proof}
The usual counterexamples to \AC\ in the symmetric forcing
models produce non-wellorderable sets of reals. (For
example, consider \cite[Lemma
5.15]{Jech1973:TheAxiomOfChoice}). But these sets admit
rigid binary relations by theorem
\ref{Theorem.SetsOfReals}.
\end{proof}

Let us now generalize theorem \ref{Theorem.SetsOfReals} to
the following result, of which we shall make use in the
proof of theorem \ref{Theorem.RR+notAC}, where we show that
\RR\ is not equivalent to \AC. We define a structure to be
{\df hereditarily rigid} if every substructure of it is
rigid. For example, every well-order is hereditarily rigid,
since any suborder of a well-order is still a well-order,
which is rigid. Also, every linearly ordered Dedekind
finite set is hereditarily rigid, since any subset of a
Dedekind finite set is Dedekind finite, and linearly
ordered such sets are rigid as we mentioned in the proof of
theorem \ref{Theorem.SetsOfReals}, since any bijection of a
Dedekind finite set has only finite orbits, and such a
bijection, if nontrivial, cannot respect a linear order. A
relation $R$ on $A$ is {\df irreflexive} if $R(a,a)$ never
holds for any $a\in A$.
%
%
%
%
%
%
%

\goodbreak
\begin{theorem}[\ZF]\label{Theorem.RxBsubsetsAreRigid}
 If a set $B$ has a hereditarily rigid irreflexive binary
 relation, then every subset
 $A\of \R\times B$ has a rigid binary relation. In
 particular, every subset $A\of\R\times\gamma$ for an
 ordinal $\gamma$ has a rigid binary relation.
\end{theorem}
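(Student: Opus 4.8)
The plan is to imitate the proof of Theorem~\ref{Theorem.SetsOfReals}, using the coordinate-wise structure of $\R\times B$. Fix a hereditarily rigid irreflexive relation $S$ on $B$, and let $A\of\R\times B$. Write $A_b=\{\,r\in\R : (r,b)\in A\,\}$ for the $b$-th section of $A$. Each $A_b$ is a set of reals, so by (the proof of) Theorem~\ref{Theorem.SetsOfReals} it carries a rigid binary relation; the key point is that the construction there is \emph{uniform} --- it depends only on a fixed enumeration of $2^\ltomega$ together with, in the Dedekind-finite-and-uncountable case, the ambient order $\lt$ of $\R$ --- so there is no need to choose a relation on each section: a single formula produces a rigid relation $R_b$ on $A_b$ simultaneously for all $b$. (Here I use that every subset of $\R$ falls into one of the three cases of that theorem, and that the relation is built without further choices.)

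Next I would assemble these section-relations together with a copy of $S$ on the index coordinate. Define a binary relation $R$ on $A$ roughly as follows: for $(r,b)$ and $(r',b')$ in $A$, put a ``horizontal'' edge when $b=b'$ and $r\mathrel{R_b}r'$, and a ``vertical'' edge encoding $S$ when $b\neq b'$ and $b\mathrel{S}b'$ (for instance $R\big((r,b),(r',b')\big)$ iff $b\ne b'$, $b\mathrel{S}b'$, and some fixed coding condition on $r,r'$ holds, or simply always in that case). Irreflexivity of $S$ guarantees the vertical and horizontal parts don't interfere on the diagonal $b=b'$. One should also add a definable gadget (as in Theorem~\ref{Theorem.SetsOfReals}, e.g.\ a self-loop or a short rigid initial segment inside one section) so that the two ``directions'' are distinguishable by an automorphism and the whole structure doesn't collapse.

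Then I would verify rigidity of $\<A,R>$. Let $\pi$ be an automorphism. First argue $\pi$ respects the fibers: because the horizontal relation on each section is exactly $R_b$ and these are pieced together via $S$, an automorphism induces a well-defined map on $\{\,b : A_b\ne\emptyset\,\}$, and this induced map is an automorphism of the corresponding substructure of $\<B,S>$ --- which is rigid since $S$ is \emph{hereditarily} rigid --- hence the identity. So $\pi$ fixes each section setwise. Then $\pi\restriction A_b$ is an automorphism of $\<A_b,R_b>$, which is rigid, so $\pi\restriction A_b$ is the identity for every $b$; therefore $\pi$ is the identity. The final sentence of the theorem is then immediate: every ordinal $\gamma$, well-ordered by $\in$, carries the hereditarily rigid irreflexive relation $\in$ (a well-order, and every suborder of a well-order is a well-order, hence rigid), so the hypothesis applies with $B=\gamma$.

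The main obstacle I anticipate is not the rigidity argument but making the reduction to a \emph{single} binary relation genuinely choice-free and clean: I must ensure the section-by-section construction of $R_b$ uses no choice (which it does not, being uniform in a fixed enumeration of $2^\ltomega$ and the order of $\R$), and I must set up the coding of the $S$-edges so that distinct sections remain distinguishable and the $b=b'$ versus $b\ne b'$ cases never collide. Getting those bookkeeping details right --- especially the definable gadget that lets an automorphism ``see'' the fiber decomposition --- is where the care is needed; the rest follows the template already established.
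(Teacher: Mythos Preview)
Your central claim --- that the construction in Theorem~\ref{Theorem.SetsOfReals} is uniform --- is false, and this is a genuine gap. In the main case of that proof (the set has a countably infinite subset), the rigid relation built there depends on a \emph{choice} of a particular countably infinite subset $Z=\{z^*,z_0,z_1,\ldots\}$; in the countable case it depends on a choice of enumeration. Neither is canonical in \ZF. Running that construction on all sections $A_b$ simultaneously would require selecting, for each Dedekind-infinite $A_b$, a countably infinite subset together with an enumeration, and this is precisely a use of the axiom of choice, not a ``single formula.'' Your parenthetical that the construction ``depends only on a fixed enumeration of $2^\ltomega$ together with, in the Dedekind-finite-and-uncountable case, the ambient order $\lt$ of~$\R$'' simply overlooks the choice of $Z$ in the central case.

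The paper sidesteps this by case-splitting on $A$ itself rather than on its sections. In the main case it picks \emph{one} countably infinite $Z\of A$, and the elements $z_n$ together with the neighborhood predicates $s_n\of x$ pin down the first (real) coordinate of every point of $A-Z$. Once first coordinates are fixed, the automorphism restricts to each slice of $A-Z$ with a given real coordinate --- whose second coordinates form a subset of $B$, not of $\R$ --- and hereditary rigidity of the relation on $B$ finishes. The slices are the opposite of yours (fix the real, vary over $B$), which is exactly why only a single countable subset need be chosen. Your secondary worry about a ``gadget'' making the fiber decomposition visible is also a real obstacle: if $b,b'$ are $S$-incomparable and $A_b,A_{b'}$ happen to carry isomorphic $R_b,R_{b'}$, nothing in your outline prevents an automorphism swapping those fibers, so the induced map on indices need not be well defined.
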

%
%
%
%

\begin{proof} We shall not use the axiom of choice in
this argument. Suppose that $\<B,R_1>$ is hereditarily
rigid and irreflexive, and consider $A\of 2^\omega\times
B$. As in theorem \ref{Theorem.SetsOfReals}, we split into
cases. If $A$ is countable, then we may easily impose a
rigid linear order on $A$ as before.

For the main case, let us suppose again merely that $A$ has
a countably infinite subset $Z=\{\, z^*, z_0,
z_1,\ldots\,\}\of A$, where $z^*$ and all the $z_n$ are
distinct. Let $s_0$, $s_1$, and so on enumerate the finite
binary sequences. Following theorem
\ref{Theorem.SetsOfReals}, define the relation $R$ on $A$
so that $R(z^*,z^*)$ holds and so that $R(z_n,z^*)$ and
$R(z_n,z_{n+1})$ holds for every natural number $n$. For
$\<x,b>\in A-Z$, we define that $R(z_n,\<x,b>)$ holds if
and only if $s_n\subseteq x$, and for $\<x,b>,\<y,c>\in
A-Z$, we define that $R(\<x,b>,\<y,c>)$ holds if and only
if $R_1(b,c)$. We claim that $\<A,R>$ is rigid. To see
this, suppose that $\pi:A\rightarrow A$ is an
$R$-automorphism. Since $R_1$ is irreflexive, it follows
that $z^*$ is the only point for which $R(z^*,z^*)$ holds,
and so as in the proof of theorem
\ref{Theorem.SetsOfReals}, we see that $z^*$ and every
$z_n$ are definable in $\<A,R>$ and hence fixed by $\pi$.
From this, it follows that $\pi(\<x,b>)=\<y,c>$ implies
$x=y$, since this is true inside $Z$, and otherwise outside
$Z$, if $x\neq y$, then some $z_n$ will be $R$-related to
$\<x,b>$, but $\pi(z_n)=z_n$ is not related to $\<y,c>$.
Thus, $\pi$ fixes all the first coordinates, and therefore
on $A-Z$ amounts to an $R_1$-automorphism of the second
coordinates, on every slice of $A-Z$. Since every such
slice is a subset of $B$, it remains rigid under the
relation $R_1$. And so $\pi$ is the identity on $A$, as
desired.

Finally, for the remaining case, we suppose that $A$ has no
countably infinite subset. In this case, define
$R(\<x,b>,\<y,c>)$ if and only if $x<y$ in the reals, or
$x=y$ and $R_1(b,c)$. If $\pi:A\to A$ is an
$R$-automorphism, then since $A$ is Dedekind finite, we see
that $\pi$ consists of finite orbits. We claim that
$\pi(\<x,b>)=\<y,c>$ implies $x=y$. To see this, consider
first the case $x<y$, which implies that $\<x,b>$ is
$R$-related to $\pi(\<x,b>)$, and consequently by induction
each point in the orbit of $\<x,b>$ is $R$-related to the
next; but since the orbit is finite, these points
eventually return to $\<x,b>$ and therefore at some point
must have their first coordinates strictly descend,
preventing $R$ at that step of the orbit, a contradiction.
The case $y<x$ is similarly contradictory, and so $x=y$, as
desired. Thus, $\pi$ amounts to an automorphism on the
second coordinates of each slice of $A$, and since $R_1$ is
hereditarily rigid, it follows that $\pi$ is the
identity.\end{proof}

\section{The \RR\ principle is not provable in \ZF}

We prove next that the rigid binary relation principle is
not provable in \ZF, assuming \ZF\ is consistent, and
therefore it may be viewed as a nontrivial choice
principle.

\begin{theorem}\label{Theorem.ZF+notRR}
If \ZF\ is consistent, then so is $\ZF+\neg\RR$.
Specifically, every model of \ZF\ admits a symmetric
extension in which \RR\ fails.
\end{theorem}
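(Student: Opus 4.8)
The plan is to build a symmetric extension $N$ of the given model $M$ of \ZF\ containing an infinite set $A$ whose elements are so indiscernible in $N$ that $\<A,R>$ fails to be rigid for every binary relation $R$ on $A$ that lies in $N$. Concretely, I force with $\mathbb P=\mathrm{Fn}(\omega\times\omega\times\omega,2)$, thought of as adding mutually Cohen-generic reals $c_{n,k}$ arranged into ``bunches'' $A_n=\{\,c_{n,k}:k<\omega\,\}$, and pass to the symmetric submodel for the group $\mathcal G$ generated by the permutations of the bunch index $n$ together with, for each $n$ independently, the permutations of the internal index $k$, using the normal filter generated by the pointwise stabilizers of finite sets of coordinates. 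Let $N$ be this symmetric submodel of $M[G]$ and set $A=\{\,A_n:n<\omega\,\}$. Then $A\in N$, since its canonical name is fixed by every element of $\mathcal G$; each $A_n\in N$; the $A_n$ are pairwise distinct, since distinct bunches are disjoint and nonempty by mutual genericity; and $A$ is infinite and Dedekind finite in $N$, by the usual ``swap with a fresh bunch'' argument.

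Granting the homogeneity fact stated below, here is how $\neg\RR$ follows. Fix any binary relation $R$ on $A$ with $R\in N$. Since $N$ consists of hereditarily symmetric sets, $R$ has a name with a finite support, and this support pins down a finite subset $F\of A$, namely the finitely many bunches whose indices the support mentions. The homogeneity fact is that $R$ is then invariant under every permutation of $A$ fixing $F$ pointwise --- equivalently, that for any two bunches $A_n,A_m$ outside $F$ the transposition $(A_n\ A_m)$, acting on $A$, is an automorphism of $\<A,R>$. Granting this, $R$ restricted to $(A\setminus F)^2$ is invariant under the full symmetric group of the infinite set $A\setminus F$, hence is one of just four relations --- the empty relation, the diagonal, its complement, or all of $(A\setminus F)^2$ --- and for each $f\in F$ the sets $\{\,a\in A\setminus F:a\mathrel R f\,\}$ and $\{\,a\in A\setminus F:f\mathrel R a\,\}$ are each empty or all of $A\setminus F$. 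It follows at once that the transposition $\sigma$ of any two distinct elements of $A\setminus F$, extended by the identity on $F$, is a nontrivial automorphism of $\<A,R>$; and $\sigma\in N$, since it is a finite modification of the identity and hence definable in $N$ from $A$ and the two elements being swapped. So $\<A,R>$ is not rigid, and as $R$ was arbitrary, $A$ admits no rigid binary relation. Therefore $N\models\neg\RR$.

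The main obstacle is the homogeneity fact: that the bunches are indiscernible in $N$ in the sense above. This is proved by the standard symmetric-forcing technique, using the symmetry lemma together with a careful choice of an automorphism of $\mathbb P$ compatible with a ground-model condition that witnesses the relevant instance of $a\mathrel R b$ --- in the same spirit as, though one notch harder than, the classical argument that the set of Cohen reals in the basic Cohen model is not well-orderable. The internal scrambling of the bunches by $\mathcal G$ is essential and is exactly what distinguishes this construction from the basic Cohen model: without it the analogous set $A$ would be a set of mutually generic Cohen reals, which is Dedekind finite but linearly ordered in $N$ by the lexicographic order, so that $\<A,R>$ would be rigid for that $R$ by the observation recalled earlier, and the basic Cohen model would not suffice. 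A cleaner though less self-contained route is to observe that in the basic Fraenkel permutation model the set of atoms carries no structure beyond equality --- every relation on it there is invariant under a cofinite symmetric group, so by the argument above it admits no rigid relation --- and then to transfer the failure of this bounded instance of \RR\ into a genuine \ZF\ symmetric extension by the Jech--Sochor embedding theorem.
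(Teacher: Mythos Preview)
Your argument is correct, and in your final sentence you actually describe the route the paper itself takes. The paper works in the basic Fraenkel model of \ZFA, observes that any binary relation $R$ on the set $A$ of atoms there has a finite support $E$, so that the transposition of any two atoms outside $E$ is a nontrivial automorphism of $\<A,R>$ lying in the model, and then invokes the Jech--Sochor theorem to transfer this bounded failure of \RR\ to a genuine symmetric \ZF\ extension. Your main argument instead builds the symmetric \ZF\ model directly, realizing the atoms as ``bunches'' $A_n$ of Cohen reals; the internal scrambling of each bunch is exactly what makes the bunches indiscernible and, as you correctly note, is what distinguishes this from the basic Cohen model, where the analogous set would be a set of reals and hence would carry a rigid relation by the paper's earlier theorem. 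Your construction is essentially the Jech--Sochor transfer unrolled by hand for this particular permutation model. The paper's route buys brevity and modularity: the combinatorics stay in the simpler \ZFA\ setting, and your ``homogeneity fact'' becomes a triviality there, since membership of $R$ in the Fraenkel model literally means $\textrm{fix}(E)\subseteq\textrm{sym}(R)$. Your route buys self-containment (no Jech--Sochor black box) and an explicit identification of the witnessing set as an element of $\mathcal{P}^2(\mathbb{R})$---an optimality observation the paper makes only after the fact, by inspecting the Jech--Sochor proof.
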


\begin{proof}
We shall make use of the permutation model technique for
constructing models of $\ZF$ set theory with violations of
\AC. It will be enough to show that any model of \ZF\ can be
extended to a permutation model $M\satisfies\ZFA$, that is,
Zermelo-Fraenkel set theory with atoms, in which the set of atoms
$A$ has no rigid binary relation. To see that this suffices, suppose we have
such a model $M$. Observe first that the statement that
every binary relation on $A$ has a nontrivial automorphism
can be expressed using the $\in$ relation and quantifiers
bounded by $\mathcal{P}^5(A)$, the fifth iteration of the
powerset operation on $A$. Meanwhile, the Jech-Sochor
theorem \cite[Theorem 6.1]{Jech1973:TheAxiomOfChoice}
provides a symmetric extension $N\satisfies\ZF$ containing a
set $B$ for which $\langle\mathcal{P}^5(A),\in\rangle^M$ is
isomorphic to $\langle\mathcal{P}^5(B),\in\rangle^N$. It
follows therefore that \RR\ fails in $N$, and we will have
thus achieved our desired model. 

The model $M$ we shall present will be the basic Fraenkel
model described in \cite[section
4.3]{Jech1973:TheAxiomOfChoice}, which is determined inside
a fixed \ZFA\ model with a countably infinite set $A$ of atoms by
the group $\mathcal{G}$ of all permutations of $A$ and the
filter $\mathcal{F}$ generated by the subgroups of the form
$\textrm{fix}(E)=\{\pi\in\mathcal{G}:\pi(a)=a\textrm{ for
all }a\in E\}$, where $E$ is any finite subset of $A$.
Note that any automorphism $\pi$ of $A$ extends hereditarily in
the natural way from $A$ to the universe built over $A$ by
the $\in$-recursive definition $\pi(x)=\set{\pi(y)\st y\in
x}$, and so $\pi(x)$ makes sense for arbitrary sets in our
\ZFA\ universe with atoms $A$. We define that such a set
$x$ belongs to $M$ exactly when there is some finite $E\of
A$ for which $\textrm{fix}(E)\subseteq\textrm{sym}(x)$,
where $\textrm{sym}(x)=\{\pi\in\mathcal{G}:\pi(x)=x\}$.
Note that when $R$ is a binary relation on $A$, the set
$\textrm{sym}(R)$ consists precisely of the automorphisms
of $\langle A,R\rangle$. 

The resulting model $M$ satisfies \ZFA\ for reasons
explained in \cite{Jech1973:TheAxiomOfChoice}. This model $M$
can be realized as an extension of any model of $\ZF$
by first adjoining countably many atoms and then taking
the submodel obtained by using $\mathcal{G}$ with $\mathcal{F}$
as described. We claim now additionally that in $M$ there is
no rigid binary relation on $A$. To see this, suppose
$R\subseteq A\times A$ is in $M$. Thus, there is a finite set $E\subseteq A$ such that
whenever $\pi\in\textrm{fix}(E)$, then
$\pi\in\textrm{sym}(R)$. In other words, any automorphism
$\pi$ of $A$ fixing the finite set $E$ pointwise will be an
automorphism of $\<A,R>$. Since $A$ is infinite, we may
easily find an automorphism $\pi$ of $A$ that swaps two
atoms outside $E$. Since $\pi$ fixes $E$, it follows that
$\pi$ is an automorphism of $\<A,R>$; and since $\pi$ has
finite support, it follows that $\pi\in M$. Thus, $M$ has a
nontrivial automorphism of $\langle A,R\rangle$, and
consequently \RR\ must fail in $M$, as desired.
\end{proof}

It follows from the proof of theorem \ref{Theorem.ZF+notRR}
and by examining the proof of the Jech-Sochor theorem that
our theorem \ref{Theorem.SetsOfReals} is optimal in the
sense that while it asserts that every set of reals admits
a rigid binary relation, this cannot be improved in general
to sets of sets of reals. This is because models  of \ZFA\
are simulated in the Jech-Sochor theorem by \ZF\ models by
a map transferring the set of atoms to a set of sets of
reals. Specifically, the set $B$ in the proof of theorem
\ref{Theorem.SetsOfReals} for which
$\langle\mathcal{P}^5(A),\in\rangle^M$ is isomorphic to
$\langle\mathcal{P}^5(B),\in\rangle^N$ is a member of
$\mathcal{P}^2(\mathbb{R})$. So although theorem
\ref{Theorem.SetsOfReals} is proved in \ZF\ for sets of
reals, the conclusion need not hold for sets of sets of
reals. (This argument also shows that the Jech-Sochor
theorem cannot be improved to map the atoms into the reals,
a fact which has already been known by other means.)

We call attention to the curious fact we noticed that all
of our provable instances of the rigid relation principle
have a hereditary nature; that is, for the sets we have
shown in \ZF\ to admit rigid binary relations in theorems
\ref{Theorem.SetsOfReals} and
\ref{Theorem.RxBsubsetsAreRigid}, their subsets also admit
rigid binary relations. Could this be due to a general
phenomenon, for which every set with a rigid binary
relation has every subset also having a rigid binary
relation? One wouldn't think so, but we have yet no
counterexample. The general phenomenon holds trivially, of
course, in models of \RR, but let us point out that it also
holds in the basic Fraenkel model $M$ of ZFA used in the
proof of theorem \ref{Theorem.ZF+notRR}, where \RR\ fails.
This is because in $M$, we claim, the sets admitting rigid
binary relations are precisely the wellorderable sets. This
follows from a result of Blass
\cite{Blass1977:RamseysTheoremintheHierarchyofChoicePrinciples},
showing that any non-wellorderable set in $M$ has an
infinite subset that is equinumerous with a subset of the
atoms $A$. In this case, the argument from theorem
\ref{Theorem.ZF+notRR} showing that $A$ has no rigid binary
relation in $M$ then shows that any non-wellorderable set
in $M$ admits no rigid binary relation.

\section{\RR\ is strictly weaker than \AC}

So far, we have observed that \RR\ is provable from the
axiom of choice, but not without. In this section, we prove
nevertheless that \RR\ is not equivalent to \AC. So it is a
strictly weaker choice principle.

\begin{theorem}\label{Theorem.RR+notAC}
 If\/ \ZF\ is consistent, then so is $\ZF+\RR+\neg\AC$.
 Indeed, \RR\ holds in the Cohen model of $\ZF+\neg\AC$.
\end{theorem}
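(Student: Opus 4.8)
The plan is to work in the Cohen model $N$ obtained by adding $\omega$ many Cohen reals $\langle c_n : n \in \omega\rangle$ to a ground model $V \models \ZFC$ and then taking the symmetric submodel given by the finite-support symmetric system where the group $\mathcal{G}$ consists of all permutations of $\omega$ acting on the indices of the Cohen reals, with the filter generated by the finitely-supported subgroups. The key structural fact to exploit is that in this model, every set $X \in N$ is, up to a bijection in $N$, a subset of $\mathcal{P}(\R)^{N}$-style objects coded over $\R$ and finitely many parameters; more usefully, one shows that every set in $N$ is (in $N$) the surjective image of, or injects into, a set of the form $\R \times \gamma$ for some ordinal $\gamma$ together with finitely many of the Cohen reals as parameters. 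So the first step is to recall or re-derive the standard support analysis for the Cohen model: every element of $N$ has a name that is invariant under $\mathrm{fix}(E)$ for some finite $E \subseteq \omega$, and from this one extracts a description of arbitrary sets in $N$ in terms of $\R$ and ordinals.

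The second and main step is to reduce the rigidity question for an arbitrary $X \in N$ to an instance already handled by theorem \ref{Theorem.RxBsubsetsAreRigid}. The cleanest route is: given $X \in N$, find in $N$ an injection of $X$ into $\R \times \gamma$ for some ordinal $\gamma$. One way is to use that in the Cohen model every set can be written as a wellordered union of sets each of which injects into $\R$ (indeed each $X$ is covered by an $\R$-indexed family modulo an ordinal amount of wellordered information coming from the ground model and the support $E$). Then $X$ is isomorphic in $N$ to a subset $A \subseteq \R \times \gamma$. Since ordinals carry a hereditarily rigid irreflexive relation (the membership/order relation, which is hereditarily rigid because every suborder of a wellorder is a wellorder), theorem \ref{Theorem.RxBsubsetsAreRigid} applies directly to $A$ and furnishes a rigid binary relation on $A$; transporting it back along the bijection gives a rigid binary relation on $X$. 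This establishes $\RR$ in $N$, while $N \models \neg\AC$ is classical (e.g. the set $\{c_n : n \in \omega\}$ has no wellordering in $N$), so $N \models \ZF + \RR + \neg\AC$, and its mere existence follows from $\Con(\ZF)$ — or rather from $\Con(\ZF)$ via the usual forcing-over-an-inner-model argument.

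The step I expect to be the main obstacle is establishing, in $\ZF$ inside $N$, that every set injects into $\R \times \gamma$ for some ordinal $\gamma$ — this is really a statement about the combinatorics of finite-support symmetric names over Cohen forcing. The delicate point is handling the non-wellorderable "spread" of an arbitrary set: one must argue that after fixing a finite support $E$, the $\mathrm{fix}(E)$-invariant part of a name decomposes so that the only obstruction to wellordering is absorbed into the $\R$-coordinate (via the Cohen reals themselves), with the remaining data wellordered. I would handle this by a name-rank induction: show that for each symmetric name $\dot{X}$ with support $E$, there is in $N$ a surjection from (a subset of) $\R \times \mathrm{Ord}$ onto $X$, and then convert the surjection to an injection using that $\R$ itself is, over the relevant parameters, "as spread out as it gets" in this model — more precisely, using that any set which is a surjective image of $\R\times\gamma$ is also, in $N$, injectable into $\R\times\gamma'$ for a possibly larger $\gamma'$, because partial injections can be chosen canonically from the Cohen generic data. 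Once that lemma is in hand, the application of theorem \ref{Theorem.RxBsubsetsAreRigid} is immediate and the theorem follows.
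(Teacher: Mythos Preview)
Your approach is essentially the same as the paper's: work in the finite-support Cohen symmetric model, show that every set there injects into $\R\times\gamma$ for some ordinal $\gamma$, and then invoke theorem~\ref{Theorem.RxBsubsetsAreRigid}. The paper handles the key injection step by quoting a known lemma (Jech, \emph{The Axiom of Choice}, Lemma~5.25): in the Cohen model there is a set of reals $A$ such that every set injects into $A^{<\omega}\times\gamma$ for some ordinal $\gamma$, whence into $\R\times\gamma$.

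The one place your sketch is shakier than the paper is your plan to first get a \emph{surjection} from $\R\times\mathrm{Ord}$ onto $X$ and then ``convert the surjection to an injection'' using some canonical choice from the Cohen data. In a model of $\ZF+\neg\AC$ this conversion is not automatic, and your justification (``partial injections can be chosen canonically from the Cohen generic data'') is not yet an argument. The standard proof of the Jech lemma produces the injection directly from the analysis of hereditarily symmetric names: one shows that each element of $X$ is determined by a condition, a finite support, and an ordinal index into a ground-model enumeration, and these data already live in $A^{<\omega}\times\gamma$. If you want to re-derive the lemma rather than cite it, aim for an injection from the start rather than routing through a surjection.
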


\begin{proof}
We shall apply theorem \ref{Theorem.RxBsubsetsAreRigid} to
show that \RR\ holds in the symmetric Cohen model $M$ of
$\ZF+\neg\AC$. Symmetric models are similar to permutation
models, but built instead with a group $\mathcal{G}$ of
automorphisms of a notion of forcing $\mathbb{P}$. For the
Cohen model, let $\mathbb{P}=\Add(\omega,\omega)$ be the
usual forcing notion to add $\omega$ many Cohen reals, that
is, the partial order consisting of the finite partial
functions from $\omega\times\omega$ into $\{0,1\}$, ordered
by reverse inclusion. Every permutation
$\pi:\omega\rightarrow\omega$ induces an automorphism
$\pi:\mathbb{P}\rightarrow\mathbb{P}$, defined by permuting
the Cohen reals in the same way, so that
$$\pi(p)(\pi(m),n)=p(m,n).$$  
Let $\mathcal{G}$ be the collection of these induced
automorphisms of $\P$. Note that any automorphism of $\P$
induces hereditarily a corresponding automorphism of the
class of $\P$-names $\tau\mapsto\tau^\pi$, defined
recursively by $\tau^\pi=\set{\<\sigma^\pi,\pi(p)>\st
\<\sigma,p>\in\tau}$, and one may prove inductively that
$p\forces_\P\varphi(\tau)$ if and only if
$\pi(p)\forces_\P\varphi(\tau^\pi)$. Let $M$ be the
submodel of the generic extension $V[G]$ of elements
$\tau_G$ having names $\tau\in V^\P$ which are symmetric in
the sense that there is some finite $E\subseteq\omega$ such
that whenever $\pi$ fixes each member of $E$, then
$\tau=\tau^\pi$. Jech \cite[Lemma
5.25]{Jech1973:TheAxiomOfChoice} establishes the following
property of this model, which we state without proof.

\begin{sublemma}
In the Cohen model $M$, there is a set of reals $A$ such
that every set can be injected into
$A^{<\omega}\times\gamma$ for some ordinal $\gamma$.
\end{sublemma}

\noindent It follows from this that every set in $M$ can be
injected into $\R^\ltomega\times\gamma$ and indeed into
$\R\times\gamma$ for some ordinal $\gamma$. Thus, every set
in $M$ is bijective with a subset of $\R\times\gamma$. But
theorem \ref{Theorem.RxBsubsetsAreRigid} establishes that
all such sets admit a rigid binary relation, and so \RR\
holds in $M$, as desired.\end{proof}

This completes the proof of theorem \ref{Theorem.RR+notAC}
and therefore also of the main theorem stated at the
beginning of the paper.

Before concluding the paper, let us make a few small
remarks about how \RR\ relates to some of the other
well-known weak choice principles. Since we have proved
that \RR\ holds in the Cohen model, \RR\
fails to imply any of the choice principles that fail in
that model. For example, \RR\ is consistent with the
existence of infinite Dedekind finite sets of reals, since
such sets exist in that model. In particular, it follows
that \RR\ does not imply the axiom of countable choice or
the axiom of dependent choices, as these fail in the Cohen
model. Meanwhile, \RR\ does not follow from $\AC_\omega$,
or from $\AC_\kappa$ for any cardinal $\kappa$, because
\RR\ fails but these axioms hold 
in the Fraenkel model arising from countable support
(or size $\kappa$ support) in place of finite support in
the proof of theorem \ref{Theorem.ZF+notRR}. We note that
in all of the models of \RR\ provided in this article,
every set can be linearly ordered, and it would be
interesting to separate this principle from \RR. Also, we
have not yet separated \RR\ from the prime ideal theorem.

%
%
%
%

We close the paper by mentioning a few of the many
directions in which one could hope to continue this
research. It would be natural to consider the rigid
relation principle in higher arities, so that one gets
rigid ternary relations or rigid $k$-ary relations on every
set. Similarly, one could consider the principles asserting
that every set has a rigid first-order structure in a
finite language or in a countable language. Returning to
binary relations, it would be natural to consider the
question of whether every set has a rigid linear order, or
a rigid symmetric relation. Another way to describe this
last idea is that although the rigid relation principle
asserts that every set is the vertex set of a rigid
directed graph, it is also natural to insist that every set
is the vertex set of a rigid graph. In addition, with each
of these variant rigidity principles one might also
consider the notion of hereditary rigidity that we
introduced above, as well as strong rigidity (no
endomorphisms rather than merely no automorphisms).

\bibliographystyle{alpha}
\bibliography{RigidRelation}

\end{document}